\theoremstyle{plain}
\newtheorem{theorem}{Theorem}[section]
\newtheorem{corollary}[theorem]{Corollary}
\newtheorem{proposition}[theorem]{Proposition}
\theoremstyle{definition}
\newtheorem{remark}[theorem]{Remark}
\theoremstyle{remark}
\newcommand{\abs}[1]{\lvert#1\rvert}
\newcommand{\norm}[1]{\lVert#1\rVert}
\renewcommand{\mid}{\::\:}
\newcommand{\term}[1]{{\textit{\textbf{#1}}}}
\DeclareMathOperator{\Span}{span}
\begin{document}
\baselineskip 18pt

\title{Every operator has almost-invariant subspaces}

\author[A.I.~Popov]{Alexey I. Popov$^1$}
\address[A.I. Popov]{Department of Pure Mathematics,
University of Waterloo,200 University Avenue West,
Waterloo, Ontario, N2L 3G1, Canada}
\email{a4popov@uwaterloo.ca}
\author[A.~Tcaciuc]{Adi Tcaciuc}
\address[A. Tcaciuc]{Mathematics and Statistics Department,
   Grant MacEwan University, Edmonton, Alberta, Canada T5J
   P2P, Canada}
\email{atcaciuc@ualberta.ca}

\thanks{${}^1$ Research supported in part by NSERC (Canada)}
\keywords{Operator, invariant subspace, finite rank, perturbation}
\subjclass[2010]{Primary: 47A15. Secondary: 47A55}

%\thanks{The third and the fourth authors were supported by NSERC}

\date{\today.}

\begin{abstract}
We show that any bounded operator $T$ on a separable, reflexive, infinite-dimensional Banach space $X$ admits a rank one perturbation which has an invariant subspace of infinite dimension and codimension. In the non-reflexive spaces, we show that the same is true for operators which have non-eigenvalues in the boundary of their spectrum. In the Hilbert space, our methods produce perturbations that are also small in norm, improving on an old result of Brown and Pearcy.
\end{abstract}

\maketitle

\section{Introduction}\label{intro}

The Invariant Subspace Problem is concerned with the existence of non-trivial, closed, invariant subspaces for a bounded operator acting on an separable, infinite-dimensional Banach space $X$. The existence of such subspaces for compact operators was proved by von Neumann (unpublished work) for Hilbert spaces and by   Aronszajn and Smith \cite{Aronszajn:54} for Banach spaces. Lomonosov \cite{Lomonosov:73} considerably increased the class of operators with an invariant subspace by showing that every non-scalar operator in the commutant of a compact operator acting on a complex Banach space has a hyperinvariant subspace. On the other hand, the first example of an operator on a Banach space without invariant subspaces was found by Enflo (see~\cite{Enf76} and~\cite{Enf87}). Later, Read~\cite{R85} constructed such an example on $\ell_1$, and also gave examples of quasinilpotent operators~\cite{R97} and strictly singular operators~\cite{R99} without invariant subspaces. Another very important example is the recent construction of Argyros and Haydon~\cite{AH} of an infinite dimensional Banach space such that every bounded operator on that space is the sum of a compact operator and a multiple of the identity; in particular, every bounded operator on this space has an invariant subspace. For an overview of the Invariant Subspace Problem we refer the reader to the monograph by Radjavi and Rosenthal~\cite{RR03} or to the more recent book of Chalendar and Partington~\cite{ChP}.

In this paper we examine a closely related problem:

 \emph{Given a bounded linear operator $T$ acting on a complex Banach space $X$, can we perturb it by a finite rank operator $F$ such that $T-F$ has an invariant subspace of infinite dimension and codimension in $X$?}

This problem, or rather an equivalent formulation of it, was introduced in a paper by Androulakis, Popov, Tcaciuc and Troitsky~\cite{APTT}. We briefly restate the basic definitions from that paper. Unless otherwise specified, for the rest of the paper it is assumed that all Banach spaces are complex and all subspaces are norm closed. A subspace $Y$ of a Banach space $X$ is called an \emph{\textbf{almost-invariant subspace}} for $T$ if there exists a finite dimensional subspace $M$ such that $TY\subseteq Y+M$. The minimal dimension of such a subspace $M$ is referred to as the \textbf{\emph{defect}} of $Y$ for $T$.  It is immediate that every finite-dimensional or finite-codimensional $Y\subseteq X$  is almost invariant under $T$. Thus, when we ask whether every operator $T$ on a Banach space $X$ has an almost-invariant subspace, the question is non-trivial only if we restrict our search to  subspaces that are of infinite dimension and codimension in $X$; such subspaces will be called $\emph{\textbf{half-spaces}}$.

Common almost-invariant subspaces for algebras of operators have been studied in~\cite{P2010} and~\cite{MPR}. In particular, it was shown in~\cite{MPR} that if a norm-closed algebra of operators on a Hilbert space has a common almost-invariant half-space then it has an invariant half-space.

The operators of central importance for the work~\cite{APTT} were the Donoghue shifts. Recall that a weighted shift $D\in\mathcal B(\mathcal H)$ is called a \term{Donoghue shift} if its weights $(w_n)$ are all non-zero and $|w_n|\downarrow 0$ (see, e.g., \cite[Section~4.4]{RR03} for more information about Donoghue shifts). The importance of Donoghue shifts for the paper~\cite{APTT} stems from the fact that all the invariant subspaces for such operators are finite dimensional. In particular, these operators do not admit invariant half-spaces.

The following theorem which, in particular, applies to Donoghue shifts, was proved in~\cite{APTT}. Recall that a sequence $\{x_n\}_n$ in a Banach space is called \term{minimal} if, for every~$k\in\mathbb N$, $x_k$ does not belong to the closed linear span of the set $\{x_n\mid n\neq k\}$ (see also \cite[Section~1.f]{Lindenstrauss:77}).
\begin{theorem}\cite[Theorem 3.2]{APTT}, \cite[Remark 1.3]{MPR}\label{oldpaper}
  Let $X$ be a Banach space and $T \in {\mathcal B}(X)$ satisfying the
  following conditions:
  \begin{enumerate}
  \item\label{rho} The unbounded component of the resolvent set $\rho(T)$ contains
      $\{z\in\mathbb C\mid 0<\abs{z}<\varepsilon\}$ for some $\varepsilon>0$.
  \item There is a vector $e\in X$ whose orbit $\{T^{n}e\}_{n=0}^{\infty}$ is a minimal sequence.
  \end{enumerate}
  Then $T$ has an almost-invariant half-space with defect at most one.
\end{theorem}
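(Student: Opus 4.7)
My plan is to construct the desired half-space from vectors of the form $(T-\lambda I)^{-1}e$. For each $\lambda$ with $0<\abs{\lambda}<\varepsilon$, condition~(i) lets me set $x_\lambda := (T-\lambda I)^{-1}e$, and the identity $(T-\lambda I)x_\lambda = e$ rearranges to $Tx_\lambda = \lambda x_\lambda + e$. Consequently, for any sequence of distinct points $(\lambda_k)_{k\ge 1}$ in the deleted disk, the space $Y := \overline{\Span}\{x_{\lambda_k} : k\ge 1\}$ will automatically satisfy $TY \subseteq Y + \mathbb{C}e$, yielding defect at most one. Before choosing the $\lambda_k$, I would reduce to the cyclic case: the subspace $E := \overline{\Span}\{T^n e : n\ge 0\}$ is $T$-invariant and infinite-dimensional (by minimality), so if $\codim E = \infty$ then $E$ is already an invariant half-space and we are done; otherwise, analytic continuation from the Neumann series $x_\lambda=-\sum_{n\ge 0}T^n e/\lambda^{n+1}$ valid for $\abs{\lambda}>\norm{T}$ shows that $x_\lambda \in E$ for every $\lambda$ in the unbounded component of $\rho(T)$, so the entire construction sits inside $E$ and I may assume $e$ is cyclic.

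Under cyclicity, the biorthogonal functionals $\{f_m\}_{m\ge 0}$ to $\{T^m e\}$ are uniquely determined on $X$ and satisfy $T^* f_m = f_{m-1}$, with the convention $f_{-1}:=0$. A short recursion from $f_m((T-\lambda I)x_\lambda) = \delta_{m,0}$ then gives the key formula $f_m(x_\lambda) = -\lambda^{-(m+1)}$. Infinite-dimensionality of $Y$ reduces to linear independence of $\{x_{\lambda_k}\}$, which I would establish by a Vandermonde-style iteration: applying the polynomial $(T-\lambda_1 I)\cdots(T-\lambda_{n-1}I)$ to a putative relation $\sum_{k=1}^n a_k x_{\lambda_k}=0$ and repeatedly using $Tx_{\lambda_k} = \lambda_k x_{\lambda_k}+e$ eventually isolates an identity of the form $a_n \prod_{j<n}(\lambda_n-\lambda_j)\,x_{\lambda_n} = P(T)e$ with $\deg P < n$, which combined with $(T-\lambda_n I)x_{\lambda_n}=e$ and the minimality of $\{T^j e\}_{j\le n}$ forces $a_n=0$.

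The main obstacle is showing infinite codimension. For any coefficient sequence $(c_m)$ with $\sum_m \abs{c_m}\norm{f_m}<\infty$, the series $g := \sum_m c_m f_m$ converges in $X^*$ and the key formula gives $g(x_{\lambda_k}) = -\lambda_k^{-1} Q(1/\lambda_k)$, where $Q(w) := \sum_m c_m w^m$. Hence any entire function $Q$ vanishing at every $R_k := 1/\lambda_k$ whose Taylor coefficients decay fast enough to meet the summability condition yields an element $g \in Y^\perp$, and the map $Q\mapsto g$ is injective since $c_m = g(T^m e)$. The plan is to choose $(\lambda_k)$ so that $R_k\to\infty$ at an iterated-exponential rate tuned to the a priori growth of $\norm{f_m}$, ensuring that the Weierstrass canonical product $Q_0(w) := \prod_k(1-w/R_k)$ is entire of sufficiently slow growth; then the linearly independent family $\{w^j Q_0(w) : j\ge 0\}$ produces, via the correspondence above, infinitely many linearly independent functionals in $Y^\perp$, giving $\codim Y=\infty$ and completing the proof.
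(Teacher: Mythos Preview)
This theorem is not proved in the present paper; it is quoted from \cite{APTT} and \cite{MPR} and only invoked in Case~2 of Theorem~\ref{reflexive}. Your outline is essentially the original \cite{APTT} argument: build $Y$ from resolvent vectors $x_{\lambda_k}$, reduce to the cyclic case so that the biorthogonal functionals satisfy $f_m(x_\lambda)=-\lambda^{-(m+1)}$, and force infinite codimension by taking $R_k=1/\lambda_k$ lacunary enough that the canonical product $Q_0(w)=\prod_k(1-w/R_k)$ has coefficients $\lvert c_m\rvert\lesssim 1/\bigl(\lvert R_1\rvert\cdots\lvert R_m\rvert\bigr)$, which can then be made to beat any prescribed growth of $\norm{f_m}$. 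The plan is correct. Two points deserve a line of care when you write it up: condition~(\ref{rho}) is used precisely to ensure that $(\lambda I-T)^{-1}$ maps $E=[T^ne]$ into itself for $0<\lvert\lambda\rvert<\varepsilon$ (via analytic continuation from the Neumann region along the unbounded component of $\rho(T)$), and the recursive choice of $R_m$ must guarantee $\sum_m\lvert c_m\rvert\,\norm{f_{m+j}}<\infty$ for \emph{every} $j\ge 0$, not just $j=0$; a lacunarity condition such as $\lvert R_{k+1}\rvert\ge 2\lvert R_k\rvert$ together with $\lvert R_1\rvert\cdots\lvert R_m\rvert\ge 2^m\max_{i\le 2m}\norm{f_i}$ suffices.

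For contrast, the paper's own Theorem~\ref{maintheorem} starts from the same vectors $h(\lambda,e)$ but, under the different hypothesis that $\partial\sigma(T)$ contains a non-eigenvalue, shows the normalized sequence is weakly null and extracts a basic subsequence via Kadets--Pe\l czy\'nski, so that $[x_{2n}]$ is a half-space with no further work. That shortcut is unavailable under the hypotheses of Theorem~\ref{oldpaper}, which is why the entire-function construction you describe is needed there.
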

Initially the theorem had an extra hypothesis condition, that $T$ has no eigenvalues, which was shown to be redundant in~\cite[Remark~1.3]{MPR} by Marcoux, Popov, and Radjavi. The authors also obtained almost-invariant half-spaces for larger classes of operators. In particular, they showed that every quasinilpotent, triangularizable, injective operator has an almost-invariant half-space, and the same holds for bitriangualar operators and  polynomially compact operators on reflexive Banach spaces.

In the present paper we show that every bounded operator $T$ acting on a reflexive Banach space admits an almost-invariant half-space with defect one. For the general Banach spaces, we show that the same holds if the boundary of the spectrum of the operator contains an element which is not an eigenvalue. This shows, in particular, that if $T$ is an operator without invariant subspaces then there exists a rank-one operator $F$ such that $(T-F)$ has an invariant subspace of infinite dimension and infinite codimension. In the Hilbert space, under the same spectral assumption, we show that the operator $F$ can be chosen as small in norm as desired. If the spectrum of $T$ consists of eigenvalues only, then, for every $\varepsilon>0$, we show that there exists a finite-rank operator $F\in\mathcal B(\mathcal H)$ of rank independent of $\varepsilon$ such that $\norm{F}<\varepsilon$ and $(T-F)$ has an invariant subspace of infinite dimension and infinite codimension.

The following simple facts which were proved in~\cite{APTT} will be used in the present paper without further reference.
\begin{proposition}\label{no-reference}
Let $X$ be a Banach space and $T\in\mathcal B(X)$. Then the following is true.
\begin{enumerate}
\item A subspace $Y$ of $X$ is $T$-almost-invariant if and only if $Y$ is  $(T-F)$-invariant for some finite-rank operator $F\in\mathcal B(X)$.
\item If $T$ admits an almost-invariant half-space then so does $T^*$.
\end{enumerate}
\end{proposition}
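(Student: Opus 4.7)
The two items are essentially independent, with (ii) reducing cleanly to (i). The easy direction of (i) is immediate: if $(T-F)Y\subseteq Y$, then $TY\subseteq Y+\Range F$ and $\Range F$ is finite-dimensional, so $Y$ is $T$-almost-invariant with defect at most $\operatorname{rank} F$.

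For the forward direction of (i), suppose $TY\subseteq Y+M$ with $\dim M=k<\infty$, and let $q\colon X\to X/Y$ be the quotient map. The composite $qT|_Y\colon Y\to X/Y$ has range contained in $q(M)$, hence is a finite-rank operator of rank $r\le k$. Write it as $qT|_Y=\sum_{i=1}^{r}\phi_i\otimes qx_i$, where $\phi_i\in Y^*$ and $x_i\in X$ are chosen so that $\{qx_i\}$ is linearly independent in $X/Y$. Extend each $\phi_i$ to $\tilde\phi_i\in X^*$ by Hahn--Banach and put $F=\sum_{i=1}^{r}\tilde\phi_i\otimes x_i$, a finite-rank operator on $X$. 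Then for every $y\in Y$,
\[
q(Ty-Fy)=qTy-\sum_{i=1}^{r}\phi_i(y)\,qx_i=0,
\]
so $(T-F)y\in Y$, as required. The only substantive ingredient is the Hahn--Banach extension; without it one could only perturb $T|_Y$ rather than $T$ itself, and this is the step I would flag as the main (though standard) obstacle.

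For (ii), given a $T$-almost-invariant half-space $Y$, apply (i) to obtain a finite-rank $F\in\mathcal B(X)$ with $(T-F)Y\subseteq Y$. Taking adjoints, $(T^*-F^*)Y^\perp\subseteq Y^\perp$, hence $T^*Y^\perp\subseteq Y^\perp+F^*(X^*)$, which shows that $Y^\perp$ is $T^*$-almost-invariant with defect at most $\operatorname{rank} F^*=\operatorname{rank} F$. It remains only to verify that $Y^\perp$ is itself a half-space of $X^*$, which follows from the standard isometric identifications $Y^\perp\cong(X/Y)^*$ and $X^*/Y^\perp\cong Y^*$: the former is infinite-dimensional because $Y$ has infinite codimension, and the latter because $Y$ is infinite-dimensional. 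No further difficulty arises.
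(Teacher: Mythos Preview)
Your argument is correct. The paper itself does not prove this proposition in full: it cites \cite{APTT} for the general Banach-space case and only sketches the Hilbert-space situation, taking $F=(I-P)TP$ with $P$ the orthogonal projection onto $Y$, and noting that $Y^\perp$ is $(T^*-F^*)$-invariant. Your quotient-map/Hahn--Banach construction is precisely the natural generalization of that projection trick to arbitrary Banach spaces (where $Y$ need not be complemented): the quotient map $q$ plays the role of $I-P$, and Hahn--Banach replaces the inclusion $Y\hookrightarrow\mathcal H$ used implicitly to extend the functionals. Your treatment of (ii), including the verification that $Y^\perp$ is a half-space via $Y^\perp\cong(X/Y)^*$ and $X^*/Y^\perp\cong Y^*$, is exactly what is needed and matches the paper's one-line remark for Hilbert space.
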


These facts are immediate in the Hilbert space setting. Indeed, if $P$ is a projection onto the space $Y$ then the operator $F$ in Proposition~\ref{no-reference}(i) can be defined by $F=(I-P)TP$. The statement of Proposition~\ref{no-reference}(ii) follows from the easy fact that if $Y$ is invariant under $(T-F)$, then $Y^\perp$ is invariant under $(T^*-F^*)$.

\section{Almost-invariant half-spaces}\label{main}

 For a Banach space $X$,  $\mathcal{B}(X)$ and $S(X)$  are, respectively, the algebra of all bounded operators on $X$ and the unit sphere of $X$. For $T\in{\mathcal B}(X)$, we write $\sigma(T)$, $\sigma_a(T)$, $\rho(T)$ and $\partial\sigma(T)$ for the spectrum
of~$T$, approximate point spectrum of $T$,  the resolvent set of~$T$ and the topological boundary of the spectrum, respectively. The closed span of a set  $\{x_n\}_n$ of vectors in $X$ is denoted by $[x_n]$.

 Recall that a sequence $(x_n)_{n=1}^{\infty}$ in $X$ is called a \textbf{basic sequence} if any $x\in[x_n]$ can be written uniquely as $x=\sum_{n=1}^{\infty} a_n x_n$, where the convergence is in norm (see \cite[section 1.a]{Lindenstrauss:77} for background on Schader bases and basic sequences). As $[x_{2n}]\cap[x_{2n+1}]=\{0\}$ it is immediate that $[x_{2n}]$ is of both infinite dimension and infinite codimension in $[x_n]$, thus a half-space, and since every Banach space contains a basic sequence, it follows that every infinite dimensional Banach space contains a half-space. We are going to use the following criterion of Kadets and Pe{\l}czy\'{n}ski for a subset of Banach space to contain a basic sequence (see, e.g., \cite[Theorem 1.5.6]{AK06})

 \begin{theorem}[Kadets, Pe{\l}czy\'{n}ski] \label{criterion}
 Let $S$ be a bounded subset of a Banach space $X$ such that $0$ does not belong to the norm closure of $S$. Then the following
 are equivalent:
 \begin{enumerate}
 \item $S$ fails to contain a basic sequence,
 \item The weak closure of $S$ is weakly compact and fails to contain $0$.
 \end{enumerate}
 \end{theorem}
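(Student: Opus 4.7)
The plan is to prove the equivalence in both directions, with the principal tools being Mazur's theorem, the Eberlein--\v{S}mulian theorem, the Bessaga--Pe{\l}czy\'{n}ski selection principle, and a standard perturbation lemma for iteratively manufacturing basic sequences. The norm-separation hypothesis $0\notin\overline{S}^{\norm{\cdot}}$ enters only through the uniform lower bound $\inf_{x\in S}\norm{x}\ge\delta>0$, which keeps the basic sequences I construct from degenerating to zero.

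For the direction (ii)$\Rightarrow$(i) I argue by contraposition: suppose $(x_n)\subset S$ is basic. If $\overline{S}^w$ is weakly compact, Eberlein--\v{S}mulian yields a subsequence $x_{n_k}\to y$ weakly with $y\in\overline{S}^w$. Mazur's theorem writes $y$ as a norm limit of convex combinations of the $x_{n_k}$, and by choosing these combinations on consecutive disjoint index blocks I obtain a block basis $(y_N)$ of $(x_{n_k})$ with $\norm{y_N-y}\to 0$. Being a block basis, $(y_N)$ is itself basic; its biorthogonal functionals extend by Hahn--Banach to $\phi_M\in X^*$ satisfying $\phi_M(y_N)=\delta_{MN}$, whence $\phi_M(y)=\lim_N\phi_M(y_N)=0$ for every $M$. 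The norm convergence $y_N\to y$ places $y$ inside the closed span $[y_N]$, so the partial-sum projections of the basis give $P_My=\sum_{N\le M}\phi_N(y)\,y_N=0$; letting $M\to\infty$ forces $y=0$, contradicting $y\in\overline{S}^w$ combined with (ii).

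For the direction (i)$\Rightarrow$(ii) I again contrapose: assume $0\in\overline{S}^w$ or $\overline{S}^w$ is not weakly compact, and build a basic sequence in $S$. In the first situation I construct $(x_n)\subset S$ inductively. The standard Mazur-type lemma asserts that for any finite-dimensional $E\subset X$ and any $\varepsilon>0$ there exist finitely many functionals $\phi_1,\dots,\phi_m\in X^*$ and a tolerance $\eta>0$ such that any $x\in X$ with $\max_i\abs{\phi_i(x)}<\eta$ satisfies $\norm{u}\le(1+\varepsilon)\norm{u+\alpha x}$ for all $u\in E$ and scalars $\alpha$. Choose $\varepsilon_n$ with $\prod_n(1+\varepsilon_n)$ finite; at the $n$th step apply the lemma with $E_n=[x_1,\dots,x_n]$, then use $0\in\overline{S}^w$ to select $x_{n+1}\in S$ making the relevant $\phi_i$ as small as desired. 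The resulting sequence is basic with constant $\prod_n(1+\varepsilon_n)$, and the bound $\delta$ keeps it non-degenerate. If instead $\overline{S}^w$ is not weakly compact, Eberlein--\v{S}mulian provides a sequence in $\overline{S}^w$ with no weakly convergent subsequence, which after a weak approximation may be taken to lie in $S$. Rosenthal's $\ell_1$-theorem then yields a subsequence either equivalent to the standard unit vector basis of $\ell_1$---hence basic---or weakly Cauchy with weak$^*$-limit $\phi\in X^{**}\setminus X$, and in the latter case the induction above can be rerun with functionals chosen to approximate $\phi$ in place of $0$ to carve a basic subsequence out of $S$.

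The main obstacle I expect is the weakly-Cauchy sub-case of the second branch: the natural ``weakly small'' objects are the differences $x_m-x_n$, which lie in $S-S$ rather than in $S$, so Bessaga--Pe{\l}czy\'{n}ski cannot be applied to them directly to produce a basic sequence inside $S$. Cleanly circumventing this requires either the full strength of Rosenthal's dichotomy or a careful direct induction that keeps each chosen $x_n$ inside $S$ while tracking how $S$ accumulates, in the weak$^*$ topology of $X^{**}$, at the extraneous functional $\phi\in X^{**}\setminus X$.
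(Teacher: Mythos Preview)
The paper does not prove this theorem; it is quoted from \cite[Theorem~1.5.6]{AK06} without proof, so there is no in-paper argument to compare your proposal against.

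As a standalone argument your outline is sound and follows the classical route. Two minor simplifications: in (ii)$\Rightarrow$(i) you do not need Mazur or block bases---if $(x_n)\subset S$ is basic and a subsequence converges weakly to $y$, then $y\in[x_n]$ (norm-closed subspaces are weakly closed) and every coordinate functional $x_m^*$ kills $y$, forcing $y=0$; and in the non-compact branch of (i)$\Rightarrow$(ii) the ``weak approximation'' step is unnecessary, since Eberlein--\v{S}mulian applied to $S$ (not $\overline{S}^w$) already gives a sequence in $S$ itself with no weakly convergent subsequence. The genuine gap is precisely the one you flag: the weakly-Cauchy sub-case. Rosenthal's $\ell_1$-theorem is legitimate here but is arguably deeper than the result you are proving; the textbook proof avoids it by noting that failure of weak compactness of $\overline{S}^w$ yields some $x^{**}$ in the weak$^*$-closure of $S$ in $X^{**}$ with $x^{**}\notin X$, and then runs the Mazur-type induction with finitely many norming functionals in $B_{X^*}$ for the finite-dimensional space $\Span\{x_1,\dots,x_n,x^{**}\}\subset X^{**}$, choosing each $x_{n+1}\in S$ to agree with $x^{**}$ on those functionals. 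You point at exactly this in your last sentence; it is the correct idea and only needs to be carried out.
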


For a nonzero vector $e\in X$ and
$\lambda\in \rho(T)$, we define, following~\cite{APTT}, a vector $h(\lambda,e)$ in $X$ by
\begin{displaymath}
   h(\lambda, e):=\bigl(\lambda I-T\bigr)^{-1}(e).
\end{displaymath}

Also, observe that $\bigl(\lambda I-T\bigr)h(\lambda, e)=e$ for
every $\lambda\in \rho(T)$, so that
\begin{equation}
  \label{Th}
  Th(\lambda,e)=\lambda h(\lambda,e)-e.
\end{equation}
If $A\subseteq\rho(T)$ and we put $Y=\overline{\Span}\bigl\{h(\lambda,e)\colon\lambda\in A\bigr\}$, it follows immediately from the previous identity that $TY\subseteq Y + \Span\{e\}$, thus $Y$ is an almost-invariant subspace of $T$ with defect one. However $Y$ is nor necessarily a half-space. We are going to construct such a $Y$ in the case when $\partial\sigma(T)$ does not consist entirely of eigenvalues, which combined with the results from \cite{APTT} and \cite{MPR} will give a complete answer to the question in the reflexive case.

\begin{remark} \label{shift}
Notice that if $Y$ is an almost-invariant half-space for $T$ then it is also almost-invariant for $T-\lambda I$ for any $\lambda\in\mathbb{C}$, and the defect stays the same.
\end{remark}

We show next that any bounded operator acting on a reflexive Banach space has an almost-invariant half-space with defect one. The essential ingredient is the following theorem.

\begin{theorem}\label{maintheorem}
Let $X$ be a Banach space and let  $T \in
{\mathcal B}(X)$ such that there exists $\mu\in\partial\sigma(T)$ that is not an eigenvalue. Then $T$ admits an almost-invariant half-space with defect one.
\end{theorem}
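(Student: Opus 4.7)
The plan is to manufacture an almost-invariant half-space out of resolvent vectors $h(\lambda_n,e)=(\lambda_n I-T)^{-1}e$ for a suitable single $e\in X$ and a sequence $\lambda_n\in\rho(T)$ with $\lambda_n\to 0$. By Remark~\ref{shift} I may assume $\mu=0$, so $0\in\partial\sigma(T)\cap\sigma(T)$ and $0$ is not an eigenvalue of $T$. Since $0$ lies on the boundary of $\sigma(T)$, a sequence $\lambda_n\in\rho(T)$ with $\lambda_n\to 0$ exists, and the resolvent identity forces $\|(\lambda_n I-T)^{-1}\|\to\infty$ (otherwise the $(\lambda_n I-T)^{-1}$ would form a Cauchy family in $\mathcal B(X)$, converging to a two-sided inverse of $-T$ and thereby placing $0$ in $\rho(T)$). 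The Uniform Boundedness Principle then produces some $e\in X$ with $\sup_n\|h(\lambda_n,e)\|=\infty$, so, passing to a subsequence, $\|h_n\|\to\infty$ where $h_n:=h(\lambda_n,e)$. Normalising $y_n:=h_n/\|h_n\|$, equation~\eqref{Th} rewrites as $Ty_n=\lambda_n y_n-e/\|h_n\|$, whence $Ty_n\to 0$ in norm.

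The key step is to extract a basic subsequence from the unit vectors $\{y_n\}$ via Theorem~\ref{criterion}. The set $\{y_n\}$ is bounded and bounded away from $0$, so either its weak closure is not weakly compact---and Theorem~\ref{criterion} delivers a basic subsequence directly---or it is weakly compact, whereupon Eberlein--\v{S}mulian produces a weak cluster point $y$ of $(y_n)$. Because $T$ is weakly continuous and $Ty_n\to 0$ in norm, $Ty=0$, so the non-eigenvalue hypothesis forces $y=0$ and places $0$ in the weak closure; Theorem~\ref{criterion} then again yields a basic subsequence $\{y_{n_k}\}$. I expect this dichotomy to be the main technical obstacle: the non-eigenvalue hypothesis on $\mu$ is used here precisely to rule out a nonzero weak cluster point in the non-reflexive setting, so the whole argument balances on it.

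To finish I would keep only every other index and set $Y:=\overline{\Span}\{h_{n_{2k}}\}=\overline{\Span}\{y_{n_{2k}}\}$. The space $Y$ is infinite-dimensional by basicness, and the vectors $y_{n_{2j+1}}$ are linearly independent modulo $Y$ by the uniqueness of basis expansion for the full basic sequence $\{y_{n_k}\}$ (any finite combination of odd-indexed $y_{n_{2j+1}}$ lying in $Y$ would admit two incompatible expansions in $\{y_{n_k}\}$), giving $Y$ infinite codimension in $X$. Finally, equation~\eqref{Th} applied to each $h_{n_{2k}}$ gives $Th_{n_{2k}}=\lambda_{n_{2k}}h_{n_{2k}}-e\in Y+\Span\{e\}$, and by continuity $TY\subseteq Y+\Span\{e\}$. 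Thus $Y$ is an almost-invariant half-space for $T$ with defect at most one.
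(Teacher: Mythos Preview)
Your argument is correct and follows essentially the same route as the paper's proof: reduce to $\mu=0$, pick $\lambda_n\in\rho(T)$ with $\lambda_n\to0$, use the Uniform Boundedness Principle to get $e$ with $\|h_n\|\to\infty$, observe $Ty_n\to 0$, and then run the Kadets--Pe\l czy\'nski dichotomy (Theorem~\ref{criterion}) together with Eberlein--\v{S}mulian and the non-eigenvalue hypothesis to extract a basic subsequence, taking the even-indexed part as the half-space. Your write-up is in fact slightly more explicit in a couple of places (the resolvent blow-up and the infinite-codimension check), but the strategy is identical.
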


\begin{proof}
By eventually replacing $T$ with $T-\mu I$ and considering Remark \ref{shift}, we can assume without loss of generality  that $\mu=0$. The idea of the proof is to construct a sequence of vectors $h(\lambda_n,e)$ as above, that after normalization is a basic sequence. To this end pick a sequence $\{\lambda_n\}$ in $\rho(T)$ such that $\lambda_n\longrightarrow 0\in\partial\sigma(T)$. It follows that $\norm{(\lambda_n I-T)^{-1}}\longrightarrow\infty$, and the uniform boundness principle guarantees we can find $e\in S(X)$ such that $\norm{h(\lambda_n,e)}=\norm{(\lambda_n I-T)^{-1}e}\longrightarrow\infty$. In order to simplify notations, denote $h(\lambda_n,e)$ by $h_n$, $\frac{h_n}{\norm{h_n}}$ by $x_n$, and let $S:=(x_n)\subseteq S(X)$. Note that

\begin{equation} \label{conv}
  Tx_n=\frac{1}{\norm{h_n}}Th_n=\frac{\lambda_n h_n}{\norm{h_n}}-\frac{1}{\norm{h_n}}e=
  \lambda_n x_n-\frac{1}{\norm{h_n}}e.
\end{equation}

%  Since $\lambda_n\longrightarrow\mu$, $x_n\longrightarrow z$ and $\norm{h_n}\longrightarrow\infty$, it follows that $\lambda_n x_n\longrightarrow\mu z$ and
%  $\frac{1}{\norm{h_n}}e\longrightarrow 0$. Therefore:
%  \[
 %   Tx_n=\lambda_n x_n-\frac{1}{\norm{h_n}}e\longrightarrow \mu z.
%  \]

If $\overline{S}^{weak}$ is not weakly compact, then  it follows from Theorem \ref{criterion} that $S$ contains a basic sequence. Thus, by eventually passing to a subsequence, we can assume that $(x_n)$ is basic. Then $Y:=[x_{2n}]$ is a half-space and $TY\subseteq Y+ [e]$, and we are done.

If $\overline{S}^{weak}$ is weakly compact, then it is weakly sequentially compact by Eberlein-\v{S}mulian theorem, and, by eventually passing to a subsequence, we can assume that $x_n\stackrel{w}{\longrightarrow} z$. Hence $Tx_n\stackrel{w}{\longrightarrow} Tz$. On the other hand, since $\lambda_n\longrightarrow 0$, and $\norm{h_n}\longrightarrow\infty$,  from (\ref{conv}) we have that
\[
Tx_n=\lambda_n x_n-\frac{1}{\norm{h_n}}e\longrightarrow 0.
\]
Therefore $Tz=0$ and since $0$ is not an eigenvalue, we must have $z=0$, and hence $x_n\stackrel{w}{\longrightarrow} 0$. By applying Theorem \ref{criterion} we obtain that $S$ contains a basic sequence and we continue as in the case when $\overline{S}^{weak}$ is not weakly compact.

\end{proof}

\begin{remark}
Note that an operator $T\in\mathcal{B}(X)$ which has no invariant subspaces cannot have any eigenvalues. It follows from the previous theorem that such an operator has an almost-invariant half-space. In particular, all known counterexamples to the invariant subspace problem (e.g. the operators constructed by Enflo or Read) are not counterexamples to the almost-invariant half-space problem.
\end{remark}

\begin{corollary}
Let $T$ be a bounded operator on a reflexive Banach space such that one of $T$ or $T^{*}$ has a boundary point of the spectrum which is not an eigenvalue. Then $T$ has an almost-invariant half-space.
\end{corollary}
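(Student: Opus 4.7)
The plan is to reduce both cases to Theorem \ref{maintheorem} together with Proposition \ref{no-reference}(ii). The first case is immediate: if $T$ itself has a point of $\partial\sigma(T)$ which is not an eigenvalue, then Theorem \ref{maintheorem} applied directly to $T$ produces an almost-invariant half-space for $T$ (with defect one).

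The second case, where it is $T^{*}$ rather than $T$ that has a non-eigenvalue boundary point of its spectrum, is where reflexivity enters. Apply Theorem \ref{maintheorem} to $T^{*} \in \mathcal B(X^{*})$ to obtain an almost-invariant half-space $Z \subseteq X^{*}$ for $T^{*}$. Now apply Proposition \ref{no-reference}(ii) to the operator $T^{*}$: since $T^{*}$ has an almost-invariant half-space, so does its Banach space adjoint $T^{**} \in \mathcal B(X^{**})$. Because $X$ is reflexive, the canonical embedding $X \hookrightarrow X^{**}$ is a surjective isometry, and under this identification $T^{**}$ corresponds to $T$ and half-spaces of $X^{**}$ correspond to half-spaces of $X$. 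Hence $T$ itself admits an almost-invariant half-space.

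The only step requiring any care is this last identification: one must observe that the isomorphism $X \cong X^{**}$ intertwines $T$ with $T^{**}$, and that the property of being an almost-invariant half-space (infinite dimensional, infinite codimensional, closed, with $TY \subseteq Y + M$ for some finite-dimensional $M$) is preserved by an isometric isomorphism that intertwines the two operators. This is routine, so no real obstacle arises; the corollary follows immediately by combining the two cases.
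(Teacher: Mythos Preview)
Your proof is correct and follows essentially the same route as the paper: apply Theorem~\ref{maintheorem} to whichever of $T$ or $T^{*}$ satisfies the hypothesis, then use Proposition~\ref{no-reference}(ii) together with reflexivity ($T^{**}=T$) to transfer the almost-invariant half-space back to $T$ in the second case. The paper compresses this into a single sentence, but the underlying argument is identical to yours.
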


\begin{proof}
Since $X$ is reflexive, $T^{**}=T$ and the conclusion follows immediately from the previous theorem and the fact that if $T$ has an almost invariant half-space, so does~$T^{*}$.
\end{proof}

We are now ready to prove the general case.

\begin{theorem} \label{reflexive}
Let $T$ be a bounded operator on a infinite-dimensional, reflexive Banach space. Then $X$ admits an almost invariant half-space with defect one.
\end{theorem}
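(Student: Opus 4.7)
The plan is to dispose of most cases via the corollary immediately preceding this theorem, which reduces the problem to the case where every boundary point of $\sigma(T)=\sigma(T^{*})$ is an eigenvalue of both $T$ and $T^{*}$. Applying Remark~\ref{shift}, I translate so that the boundary eigenvalue in question is $\mu=0$, and so both $\ker T$ and $\ker T^{*}$ are nonzero.

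I would then branch on the sizes of these eigenspaces. If $\dim\ker T=\infty$, pick a basic sequence $(e_{n})$ inside $\ker T$ and put $Y=\overline{\Span}\{e_{2n}\}$; then $Y\subseteq\ker T$ is $T$-invariant, infinite-dimensional, and its codimension in $X$ is infinite because the biorthogonal functionals of $(e_{n})$ keep the $e_{2n-1}$ independent modulo $Y$. If instead $\dim\ker T<\infty$ but $\dim\ker T^{*}=\infty$, the closed subspace $\overline{\Range T}=(\ker T^{*})^{\perp}$ is $T$-invariant, has codimension $\dim\ker T^{*}=\infty$, and is itself infinite-dimensional (if it were finite-dimensional then $T$ would have finite rank, forcing $\dim\ker T=\infty$, contrary to hypothesis). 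In either subcase we already obtain a $T$-invariant half-space, so $T$ has an almost-invariant half-space with defect zero.

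The remaining and most delicate subcase is $\dim\ker T<\infty$ and $\dim\ker T^{*}<\infty$, where neither construction above produces a half-space. Here I would revisit the proof of Theorem~\ref{maintheorem} with a sharper choice of $e$: take $e\in S(X)\cap\overline{\Range T}=S(X)\cap(\ker T^{*})^{\perp}$. Applying any $f\in\ker T^{*}$ to $(\lambda I-T)h(\lambda,e)=e$ together with $T^{*}f=0$ and $f(e)=0$ gives $\lambda f(h(\lambda,e))=0$, so every $h(\lambda,e)$ and every normalized $x_{n}=h_{n}/\norm{h_{n}}$ lies in $\overline{\Range T}$. Then, in Case~(b) of the proof of Theorem~\ref{maintheorem}, any weak accumulation point $z$ of $(x_{n})$ satisfies $Tz=0$ and $z\in\overline{\Range T}$, i.e.\ $z\in\ker T\cap\overline{\Range T}$, a finite-dimensional subspace. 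If this intersection is trivial, $z=0$ and the Kadets--Pe{\l}czy\'{n}ski theorem produces a basic sequence exactly as in Theorem~\ref{maintheorem}; otherwise one restricts to the $T$-invariant, finite-codimensional, reflexive subspace $\overline{\Range T}$, which inherits the standing hypothesis, and iterates through $X\supseteq\overline{\Range T}\supseteq\overline{\Range T^{2}}\supseteq\cdots$.

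The main obstacle is this last subcase: forcing the modified construction inside $\overline{\Range T}$ (or its iterates) to yield a weakly null sequence of approximate eigenvectors to which Kadets--Pe{\l}czy\'{n}ski applies. Once such a basic sequence is in hand, the identity $Tx_{n}=\lambda_{n}x_{n}-e/\norm{h_{n}}$ delivers, as in Theorem~\ref{maintheorem}, an almost-invariant half-space with defect one.
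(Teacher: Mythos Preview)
Your reduction via the preceding corollary and your handling of the cases $\dim\ker T=\infty$ or $\dim\ker T^{*}=\infty$ are correct and already give genuine invariant half-spaces. The difficulty, as you yourself flag, lies entirely in the remaining subcase, and there your sketch does not close.

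There are two concrete gaps. First, you never verify that an $e\in S(X)\cap\overline{\Range T}$ can be found with $\norm{h(\lambda_n,e)}\to\infty$; uniform boundedness only supplies such an $e$ in $X$, and to get one in $R:=\overline{\Range T}$ you need $\bignorm{(\lambda_n I-T)^{-1}|_R}\to\infty$, equivalently $0\in\sigma(T|_R)$. This is not automatic precisely in your ``good'' case $\ker T\cap R=\{0\}$, since then $T|_R$ is injective and may well be invertible on $R$. Second, your iteration $X\supseteq\overline{\Range T}\supseteq\overline{\Range T^{2}}\supseteq\cdots$ has no termination mechanism: the chain $\ker T\cap\overline{\Range T^{j}}$ does stabilize (it sits inside the finite-dimensional $\ker T$), but it may stabilize at a nonzero subspace, and then your scheme never exits the loop.

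The paper resolves the hard case by a different route. It first splits on the cardinality of $\partial\sigma(T)$. If $\partial\sigma(T)$ is infinite, one chooses two disjoint infinite subsets, takes eigenvectors of $T$ for one and eigenvectors of $T^{*}$ for the other; the closed span of the former is then $T$-invariant, infinite-dimensional, and annihilated by infinitely many linearly independent functionals, hence of infinite codimension. If $\partial\sigma(T)$ is finite (so $\sigma(T)$ is finite), one iterates $Y_n=\overline{T^{n}X}$ as you propose, but first stabilizes the finite sets $\sigma(T|_{Y_n})$ and then invokes a dichotomy: either some vector has a minimal orbit, and Theorem~\ref{oldpaper} applies directly, or some $T|_{Y_j}$ has dense range, whence $(T|_{Y_j})^{*}$ is injective with $0\in\partial\sigma\bigl((T|_{Y_j})^{*}\bigr)$, so Theorem~\ref{maintheorem} applies to the adjoint and reflexivity transfers the conclusion back. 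Your idea of seeking $T|_{Y_j}$ injective rather than with dense range is in spirit dual to this, but it lacks both the minimal-orbit exit route and the spectrum-stabilization step that make the paper's iteration terminate.
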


\begin{proof}
In view of the previous Corollary, we may assume that any point in the $\partial\sigma(T)=\partial\sigma(T^{*})$ is an eigenvalue for both $T$ and $T^{*}$.

 Case 1: $\partial\sigma(T)$ has infinite cardinality.  In this situation, consider $A_1$ and $A_2$, disjoint, countably infinite,  subsets of $\partial\sigma(T)$, say $A_1=\{\lambda_n\}_n$ and $A_2=\{\mu_n\}_n$. Pick eigenvectors $\{x_n\}_n$ in $X$ and $\{f_n\}_n$ in $X^{*}$ such that $Tx_n=\lambda_n x_n$ and $T^{*}f_n=\mu_n f_n$. Clearly both $\{x_n\}_n$ and $\{f_n\}_n$ are linearly independent, thus, in particular, the subspace $Y:=[x_n]$ is infinite dimensional.  It is easy to see that $Y$ is T-invariant and in order to show that it is a half-space it remains to show that $Y$ is infinite codimensional as well. For any $n$ and $k$ in $\mathbb{N}$ we have:
\[
  \lambda_k f_n(x_k)=f_n(\lambda_k x_k)=f_n(Tx_k)=(T^{*}f_n)(x_k)=\mu_nf_n(x_k).
\]

Since for any $n$, $k$ in $\mathbb{N}$ we have $\lambda_k\neq\mu_n$, it follows that $f_n(x_k)=0$. Therefore, for any $n\in\mathbb{N}$, $f_n(Y)=0$, so $Y$ is annihilated by the linearly independent functionals $\{f_n\}_n$. This proves that $Y$ is infinite codimensional.

Case 2: $\partial\sigma(T)$ is finite. In this case $\sigma(T)$ must also be finite, so $\partial\sigma(T)=\sigma(T)$.  Note that in this situation $\sigma(T)$ satisfies the condition $(i)$ in Theorem \ref{oldpaper}. Also, for any $T$-invariant subspace $Y$ of $X$, we have that $\partial(\sigma(T_{|Y}))\subseteq\sigma_a(T_{|Y})\subseteq\sigma_a(T)=\sigma(T)$, hence $\sigma(T_{|Y})$ is also finite and $\sigma(T_{|Y})\subseteq\sigma(T)$.

  For any $n\in\mathbb{N}$, denote by $Y_{n}=\overline{T^{n}X}$, with $Y_0:=X$. We have that each  $Y_n$ is invariant under $T$, $Y_{n+1}=\overline{TY_n}$ and $X\supseteq Y_1\supseteq Y_2\supseteq\dots.$ Also note that for any $j,n\in\mathbb{N}$ and any $y\in Y_j$ we have that $T^{n}(y)\in Y_{j+n}$. We can assume that each $Y_j$ is infinite dimensional. Indeed, otherwise, if $j$ is the smallest index for which $Y_j$ is finite dimensional, then any half-space of $Y_{j-1}$ containing $Y_j$ is an invariant half-space for $T$.  We have that for any $n$, $\sigma(T_{|Y_{n+1}})\subseteq\sigma(T_{|Y_{n}})$ and since $\sigma(T)$ is finite it follows that there exists $k\in\mathbb{N}$ such that $\sigma(T_{|Y_{n}})=\sigma(T_{|Y_{k}})\neq\emptyset$ for any $n\geq k$. Since an almost-invariant half-space for $T_{|Y_{k}}$ is an almost-invariant half-space for $T$, we can assume without loss of generality that $k=0$ and, as we did before, we can also assume that $0\in\sigma(T)$.

   First, we claim that either we can find a vector $z$ such that the orbit $\{T^{n}z\}$ is minimal, or the restriction of $T$ to some $Y_j$ has dense range.  The proof is inspired by a very similar result from \cite{MPR} (see  Theorem 2.5). We can assume $Y_1$ is of finite codimension in $X$, otherwise we can find an invariant half-space for $T$. Hence $Y_1$ is complemented in $X$ and we can write $X=Y_1\oplus Z$, where $Z$ is finite dimensional.

If $Z=\{0\}$ then $T$ has dense range. Otherwise, let $\{z_1,z_2,\dots z_k\}$ be a basis for $Z$ and assume the orbit $\{T^{n}z_j\}_n$ is not minimal for any $1\leq j\leq k$. For any $1\leq j\leq k$ denote by $p_j$ the smallest index such that $T^{p_j}z_j\in[T^n z_j]_{n\neq p_j}$. It is not hard to prove (see e.g. Lemma 2.2 in \cite{MPR}) that for this choice of $p_j$ we actually have that $T^{p_j}z_j\in[T^n z_j]_{n > p_j}$, thus $T^{p_j}z_j\in Y_{p_j+1}$, for any $1\leq j\leq k$. If we let $p_0:=\max\{p_1,  p_2, \dots p_k\}$, it follows that $T^{p_0}z_j=T^{p_{0}-p_{j}}(T^{p_j}z_j)\in Y_{p_0+1}$ for any $1\leq j\leq k$. Therefore, since $\{z_1,z_2,\dots z_k\}$ is a basis for $Z$, we have that $T^{p_0}z\in Y_{p_0+1}$ for any $z\in Z$. We also have that $T^{p_0}y\in Y_{p_0+1}$ for any $y\in Y_1$, and since $X=Y_1\oplus Z$ it follows that $T_{p_0}x\in Y_{p_0+1}$ for any $x\in X$. This means that $\overline{T^{p_0}X}\subseteq Y_{p_0+1}$, so $Y_{p_0}\subseteq Y_{p_0+1}$. On the other hand, $Y_{p_0+1}\subseteq Y_{p_0}$,
therefore $Y_{p_0+1}=Y_{p_0}$ and the last equality means that $T_{|Y_{p_0}}$ has dense range, and this proves the claim.

If there exists a vector $z$ such that the orbit $\{T^{n}z\}$ is minimal, we can apply Theorem~\ref{oldpaper} to conclude that $T$ has an almost-invariant half-space with defect at most one. Otherwise, let $j\in\mathbb{N}$ such that $S:=T_{|Y_j}:Y_j\to Y_j$ has dense range. Therefore $S^{*}$ is injective, hence $0\in\sigma(S)=\sigma(S^{*})=\partial\sigma(S^{*})$ is not an eigenvalue for $S^{*}$. It follows from Theorem \ref{maintheorem} that $S^{*}$ has an almost-invariant half-space with defect at most one and, since $X$ is reflexive, so does $S$, hence also $T$. This concludes the proof of the theorem.

\end{proof}

For bounded operators on a separable Hilbert space $\mathcal{H}$ we have the following immediate corollary.

\begin{corollary}
For any $T\in\mathcal{B}(\mathcal{H})$ there exists an infinite dimensional subspace $Y$ with infinite dimensional orthogonal complement such that $(I-P)TP$ has rank one, where $P$ is the orthogonal projection onto $Y$. Equivalently, relative to the decomposition $\mathcal{H}=Y\oplus Y^{\perp}$, $T$ has the form

$$T=\left[
  \begin{array}{cc}
    * & * \\
    F & * \\
  \end{array}
\right]$$ where $F$ has rank one.
\end{corollary}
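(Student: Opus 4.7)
The plan is to read this off directly from Theorem~\ref{reflexive}, since every Hilbert space is reflexive. Applying that theorem to $T \in \mathcal{B}(\mathcal{H})$ produces a half-space $Y \subseteq \mathcal{H}$ together with a subspace $M$ of dimension at most one satisfying $TY \subseteq Y + M$.

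Let $P$ denote the orthogonal projection onto $Y$. For each $y \in Y$ write $Ty = y' + m_y$ with $y' \in Y$ and $m_y \in M$; then $(I-P)Ty = (I-P)m_y$ lies in $(I-P)M$, a subspace of dimension at most one. Hence $(I-P)TP$ has rank at most one, and its restriction to $Y$, viewed as an operator $Y \to Y^\perp$, is precisely the off-diagonal block $F$ in the decomposition $\mathcal{H} = Y \oplus Y^\perp$. Since $Y$ is a half-space, both $Y$ and $Y^\perp$ are infinite dimensional, which delivers the required matrix form.

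The only minor wrinkle is that the defect could be zero, in which case $Y$ is genuinely $T$-invariant and $F = 0$; this is absorbed by the natural reading ``rank at most one'' of the corollary's wording, and corresponds to an even stronger conclusion. There is no real obstacle, since all the substantive work, namely producing a half-space $Y$ with defect at most one, was already carried out in Theorem~\ref{reflexive}; in Hilbert space the passage from almost-invariance to the rank-one compression $(I-P)TP$ is exactly the elementary observation already invoked after Proposition~\ref{no-reference}.
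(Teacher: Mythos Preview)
Your argument is correct and is exactly the intended one: the paper states this as an ``immediate corollary'' of Theorem~\ref{reflexive} without further proof, and your write-up spells out precisely that deduction (including the elementary Hilbert-space observation $F=(I-P)TP$ already noted after Proposition~\ref{no-reference}). Your remark about ``rank at most one'' is apt, since Case~1 of the proof of Theorem~\ref{reflexive} actually produces a genuinely invariant half-space, so the paper's ``defect one'' should indeed be read as ``defect at most one''.
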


\section{Operators on Hilbert spaces}

Let $X$ be a Banach space and $T\in L(X)$. The results in the previous section are concerned with the choice of a rank-one operator $F\in L(X)$ such that $T-F$ has an invariant subspace of infinite dimension and infinite codimension. In the present section, we will show that in the case of a  Hilbert space, our methods also provide operator $F$ which is small in norm.

We start by recalling a result proved by Brown and Pearcy in~\cite{BP71}.

\begin{theorem}[Brown, Pearcy]
Let $T\in\mathcal{B}(\mathcal{H})$ and $\varepsilon>0$. Then there exists a scalar $\lambda$ and a decomposition of $\mathcal{H}=Y\oplus Y^{\perp}$ into infinite dimensional subspaces such that the corresponding matrix representation of $T$ has the form
$T=\left[
  \begin{array}{cc}
    \lambda I+K & * \\
    F & * \\
  \end{array}
\right],$
where $K$ and $F$ are compact and have norms at most $\varepsilon$.
\end{theorem}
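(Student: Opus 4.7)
\emph{Plan.} The strategy is to extract an orthonormal approximate-eigenvector sequence for $T$ and take $Y$ to be the closed span of every second vector in that sequence. The key input is the standard fact that for any $T\in\mathcal{B}(\mathcal{H})$ on an infinite-dimensional Hilbert space, one can find $\lambda\in\sigma(T)$ together with an orthonormal sequence $(e_n)\subseteq\mathcal{H}$ such that $\|(T-\lambda I)e_n\|\to 0$; this follows from the non-emptiness of the essential spectrum together with Fredholm theory. After passing to a subsequence, I may assume $\delta_n:=\|(T-\lambda I)e_n\|$ satisfies $\sum_n\delta_n^2<\varepsilon^2$.

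Now set $Y=\overline{\Span}\{e_{2n}:n\ge 1\}$ and let $P$ be the orthogonal projection onto $Y$. Both $Y$ and $Y^\perp$ are infinite-dimensional, since the orthonormal sequence $\{e_{2n+1}\}$ lies in $Y^\perp$. Writing $Te_{2n}=\lambda e_{2n}+r_n$ with $\|r_n\|<\delta_{2n}$, the block decomposition of $T$ with respect to $\mathcal{H}=Y\oplus Y^\perp$ has top-left block $PTP|_Y=\lambda I_Y+K$ and bottom-left block $F=(I-P)TP$, where $K\colon Y\to Y$ is the bounded extension of $e_{2n}\mapsto Pr_n$ and $F\colon Y\to Y^\perp$ is the bounded extension of $e_{2n}\mapsto (I-P)r_n$.

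For the norm bounds, take $y=\sum_n c_n e_{2n}$ of unit norm and apply the triangle inequality followed by Cauchy--Schwarz:
\[
  \|Ky\|\le\sum_n|c_n|\,\|Pr_n\|\le\sum_n|c_n|\delta_{2n}\le\Bigl(\sum_n|c_n|^2\Bigr)^{1/2}\Bigl(\sum_n\delta_{2n}^2\Bigr)^{1/2}<\varepsilon,
\]
and the identical computation gives $\|F\|<\varepsilon$. Compactness of $K$ and $F$ follows by approximating them in norm by their finite-rank truncations (those obtained by setting $Pr_n$ and $(I-P)r_n$ equal to zero for $n>N$): the same Cauchy--Schwarz estimate applied to the tail gives a bound tending to zero as $N\to\infty$.

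The main obstacle is the very first step: arranging an \emph{orthonormal} (not merely norm-null) Weyl sequence for $T$ itself. An arbitrary $\lambda\in\sigma_{a}(T)$ may only admit approximate eigenvectors clustering in a fixed finite-dimensional eigenspace, and performing the analogous construction for $T^*$ would place the compact small correction in the top-right rather than the bottom-left block. The remedy is to choose $\lambda$ in the left essential spectrum $\sigma_{le}(T)$, which is always non-empty on an infinite-dimensional Hilbert space since otherwise $T-\lambda I$ would be left Fredholm for every $\lambda$, contradicting the non-emptiness of $\sigma_e(T)$. With such $\lambda$ fixed, an orthonormal Weyl sequence is produced by a Gram--Schmidt argument on any weakly null sequence of approximate eigenvectors, which exists because $T-\lambda I$ fails to be bounded below on any finite-codimensional closed subspace.
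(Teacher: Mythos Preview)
The paper does not actually prove this theorem: it is quoted as a classical result of Brown and Pearcy and serves only as a benchmark against which the paper's sharper Theorem~\ref{smallness-hilb} is measured. So there is no in-paper argument to compare to.

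Your proof is correct and is essentially the classical one. The only step that deserves a word of expansion is the claim that $\sigma_{le}(T)\ne\emptyset$: the sentence ``otherwise $T-\lambda I$ would be left Fredholm for every $\lambda$, contradicting the non-emptiness of $\sigma_e(T)$'' is right but compressed, since left (upper semi-) Fredholm does not by itself imply Fredholm. The implicit argument is that the semi-Fredholm index is locally constant, so if $T-\lambda I$ were upper semi-Fredholm for every $\lambda\in\mathbb C$ then connectedness of $\mathbb C$ would force the index to equal its value for large $|\lambda|$, namely $0$; upper semi-Fredholm of index $0$ is Fredholm, giving $\sigma_e(T)=\emptyset$, a contradiction. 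With that spelled out, the remainder (orthonormal Weyl sequence, $\ell^2$-summable defects, Cauchy--Schwarz estimate, finite-rank truncation for compactness) is clean.

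It may be worth noting that the paper's own Proposition~\ref{smallnorm} and Theorem~\ref{smallness-hilb} follow a kindred strategy---building $Y$ as the span of a basic sequence along which $T-\lambda I$ is small---but there $\lambda$ is taken in $\partial\sigma(T)$ and assumed not to be an eigenvalue, and the vectors arise as normalized resolvent images $h(\lambda_n,e)/\norm{h(\lambda_n,e)}$ rather than as an abstract Weyl sequence. That extra structure is precisely what lets the paper upgrade $F$ from ``compact'' to ``rank one''.
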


%In this section we show that, in some cases, the almost-invariant half-spaces can be chosen such that the restriction of $T$ to them is compact and has small norm. In essence, we improve on the Theorem $\ref{maintheorem}$. As a corollary we obtain, in some cases, an improvement on the aforementioned result of Brown and Pearcy.

Our first result in this section is valid in all Banach spaces.

\begin{proposition}\label{smallnorm}
Let $X$ be a Banach space and let  $T \in
{\mathcal B}(X)$ such that there exists $\lambda\in\partial\sigma(T)$ that is not an eigenvalue.  Then for any $\varepsilon>0$, $T$ has an almost-invariant half-space $Y_\varepsilon$ such that $(T-\lambda I)_{|Y_{\varepsilon}}:Y_{\varepsilon}\to X$ is compact and $\norm{(T-\lambda I)_{|Y_{\varepsilon}}}<\varepsilon$.
\end{proposition}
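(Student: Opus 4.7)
The plan is to refine the construction in Theorem \ref{maintheorem} quantitatively, taking advantage of the fact that the unit vectors $x_n$ produced there already satisfy $Tx_n \to 0$ in norm. After replacing $T$ with $T - \lambda I$ we may assume $\lambda = 0 \in \partial\sigma(T)$ is not an eigenvalue. As in that proof, I would choose $\lambda_n \to 0$ in $\rho(T)$, use the uniform boundedness principle to find a unit vector $e$ with $\|h_n\| \to \infty$ where $h_n := (\lambda_n I - T)^{-1} e$, and set $x_n := h_n/\|h_n\|$. Identity (\ref{conv}) then gives
\[
Tx_n = \lambda_n x_n - \frac{e}{\|h_n\|}, \qquad \text{so} \qquad \|Tx_n\| \le |\lambda_n| + \frac{1}{\|h_n\|} \longrightarrow 0.
\]

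Next I would extract a basic subsequence of $(x_n)$ exactly as in the two cases of the proof of Theorem \ref{maintheorem} (one handles non-weak-compactness directly; the other uses that $(x_n)$ converges weakly to $0$, which is forced by the eigenvalue hypothesis together with $Tx_n \to 0$). By standard facts about basic sequences we may also arrange the basis constant $C$ to be at most $2$. Since $\|Tx_n\| \to 0$, I can then pass to a further subsequence, still written $(x_n)$, so that $\sum_n \|Tx_n\| < \varepsilon/(2C)$. Setting $Y_\varepsilon := [x_{2n}]$ gives a half-space of $X$, and the almost-invariance $TY_\varepsilon \subseteq Y_\varepsilon + \Span\{e\}$ with defect one is immediate from the displayed formula.

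For the norm and compactness claims, recall that for a basic sequence of unit vectors with basis constant at most $C$, the biorthogonal coefficient functionals have norm at most $2C$. Thus any $y = \sum_k a_k x_{2k} \in Y_\varepsilon$ satisfies $|a_k| \le 2C\|y\|$, and the triangle inequality gives
\[
\|Ty\| \le \sum_k |a_k|\,\|Tx_{2k}\| \le 2C\|y\| \sum_k \|Tx_{2k}\| < \varepsilon \|y\|.
\]
Compactness of $T|_{Y_\varepsilon}$ follows by approximating it in norm by the finite rank operators $T_N\colon \sum_k a_k x_{2k} \mapsto \sum_{k \le N} a_k Tx_{2k}$, since the same estimate yields $\|T|_{Y_\varepsilon} - T_N\| \le 2C\sum_{k > N}\|Tx_{2k}\| \to 0$. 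Undoing the initial shift transfers both conclusions to $(T - \lambda I)|_{Y_\varepsilon}$.

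The one point requiring mild care is the order of the two subsequence selections: one must first fix a basic subsequence (with $C$ bounded) and only then thin further to control $\sum \|Tx_n\|$ in terms of $C$. Apart from this bookkeeping, the proof is a quantitative rereading of the construction in Theorem \ref{maintheorem}; the essentially new observation is that both $|\lambda_n|$ and $1/\|h_n\|$ --- the only quantities contributing to $\|Tx_n\|$ --- can be driven to zero arbitrarily fast by passing to subsequences, which is exactly what makes the perturbation small in norm and its restriction compact.
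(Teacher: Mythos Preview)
Your proof is correct and follows essentially the same approach as the paper's: both reduce to $\lambda=0$, invoke the basic sequence $(x_n)$ from Theorem~\ref{maintheorem}, use \eqref{conv} to see $\|Tx_n\|\to 0$, and then thin so that $\sum_n \|Tx_n\| < \varepsilon/(2C)$, from which the norm bound and compactness of $T|_{Y_\varepsilon}$ follow. The paper takes $Y_\varepsilon=[x_n]$ after an implicit relabeling and cites \cite[Lemma~4.59]{AA} for the final estimates, whereas you take $Y_\varepsilon=[x_{2n}]$ and write out the coefficient-functional bound explicitly; these are cosmetic differences only.
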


\begin{proof}
As before, we can assume without loss of generality that $\lambda=0$. With the notations from the proof of Theorem \ref{maintheorem}, we obtain a basic sequence $(x_n)$ in $X$ such that $[x_{n}]$ is an almost-invariant half-space for $T$. Moreover, from (\ref{conv}) in Theorem \ref{maintheorem}, it follows that $Tx_{n}\longrightarrow 0$. Fix $\varepsilon>0$ and denote by $C$ the basis constant of $(x_{n})$. By eventually passing to a subsequnce of $(x_n)$ we can assume that
\[
\sum_{1}^{\infty}||Tx_n||<\frac{\varepsilon}{2C}.
\]

If we denote by $Y_{\varepsilon}:=[x_n]$,  easy calculations (see e.g. \cite{AA}, Lemma 4.59) show that
$\norm{T_{|Y_{\varepsilon}}}<\varepsilon$ and also that $T_{|Y_{\varepsilon}}$ is the norm limit of finite rank operators $TP_{n}$, where for each $n$, $P_n:Y_\varepsilon\to Y_\varepsilon$ is the  projection on  the fist $n$ coordinates of the basis $(x_n)$. Therefore $T_{|Y_{\varepsilon}}$ is compact and the theorem is proved.
\end{proof}

%\begin{corollary}
%Let $X$ be a Banach space and let  $T \in
%{\mathcal L}(X)$ such that there exists $\lambda\in\partial\sigma(T)$ that is not an eigenvalue. %Then for any $\varepsilon>0$ there exists a rank one operator $F_{\varepsilon}$ with norm at most $$$%\varepsilon$ such that $T-F_{\varepsilon}$ has an invariant half-space.
%\end{corollary}

If the underlying space is a Hilbert space, we can get a more specific information about the structure of the operator. %We remind the reader that an operator $T$ on a separable Hilbert space is called \term{diagonalizable} if there exists an invertible operator $S\in\mathcal B(\mathcal H)$ such that $STS^{-1}$ is diagonal with respect to an orthonormal basis.

\begin{theorem}\label{smallness-hilb}
Let $T\in\mathcal{B}(\mathcal{H})$ be such that there exists $\lambda\in\partial\sigma(T)$ which is not an eigenvalue. Then for any $\varepsilon>0$, there exists a decomposition of $\mathcal{H}=Y\oplus Y^{\perp}$ into infinite dimensional subspaces such that the corresponding matrix representation of $T$ has the form
$T=\left[
  \begin{array}{cc}
    \lambda I+K & * \\
    F & * \\
  \end{array}
\right],$
where $K$ is compact, $F$ has rank one, and both have norms at most $\varepsilon$.

Moreover, applying a similarity, we can obtain that $K$ is  diagonal with respect to an orthonormal basis.
\end{theorem}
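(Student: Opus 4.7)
The plan is to apply Proposition~\ref{smallnorm} to construct the almost-invariant half-space $Y$, read off the required block form with respect to the orthogonal projection $P$ onto $Y$ for the main statement, and then diagonalize the upper-left block by a carefully designed block-upper-triangular similarity.

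By Remark~\ref{shift} we may assume $\lambda=0$. The constructions in Theorem~\ref{maintheorem} and Proposition~\ref{smallnorm} yield a Schauder basis $(x_n)$ of an almost-invariant half-space $Y=[x_n]$ satisfying $Tx_n=\lambda_n x_n-a_n e$ with $\lambda_n,a_n\to 0$ and $\norm{T_{|Y}}<\varepsilon$. In the Hilbert space setting $(x_n)$ is automatically weakly null (any weak cluster point $z$ would satisfy $Tz=0$, forcing $z=0$ since $0$ is not an eigenvalue), so by the standard Bessaga--Pe{\l}czy\'nski-type perturbation-of-basis principle we may pass to a further subsequence making $(x_n)$ a Riesz basis of $Y$, i.e.\ equivalent to an orthonormal basis. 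Letting $P$ denote the orthogonal projection onto $Y$ and decomposing $\mathcal H=Y\oplus Y^\perp$, the first assertion of the theorem follows immediately: $K=PT_{|Y}$ is compact (composition of the compact operator $T_{|Y}$ with $P$) with $\norm{K}\le\norm{T_{|Y}}<\varepsilon$, while $F=(I-P)T_{|Y}$ takes values in $\Span\{(I-P)e\}$ (since $TY\subseteq Y+\Span\{e\}$), hence has rank at most one and $\norm{F}<\varepsilon$.

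For the \emph{moreover} assertion, fix any orthonormal basis $(e_n)$ of $Y$ and define the block-upper-triangular similarity
\[
V=\begin{pmatrix}V_{11}&V_{12}\\0&I\end{pmatrix}\quad\text{on }\mathcal H=Y\oplus Y^\perp,
\]
where $V_{11}:Y\to Y$ is the bounded invertible operator determined by $V_{11}e_n=x_n$ (bounded invertibility following from the Riesz property of $(x_n)$) and $V_{12}:Y^\perp\to Y$ is the rank-one operator sending $(I-P)e$ to $Pe$ and vanishing on the orthogonal complement of $(I-P)e$ inside $Y^\perp$. Writing the block inverse of $V$, a direct computation gives $V^{-1}e=(I-P)e$, and therefore
\[
V^{-1}TVe_n=V^{-1}(\lambda_n x_n-a_n e)=\lambda_n e_n-a_n(I-P)e.
\]
Relative to the same orthogonal decomposition $\mathcal H=Y\oplus Y^\perp$, the similar operator $V^{-1}TV$ thus has exactly the required matrix form, with upper-left block $D=\operatorname{diag}(\lambda_n)$ diagonal in the orthonormal basis $(e_n)$, and lower-left block $F'e_n=-a_n(I-P)e$ of rank one. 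The bounds $\norm{D}=\sup_n|\lambda_n|<\varepsilon$ and $\norm{F'}\le\bigl(\sum_n a_n^2\bigr)^{1/2}\norm{(I-P)e}<\varepsilon$ are secured at the outset by choosing the subsequence with $|\lambda_n|$ and $a_n$ sufficiently small. Restoring the original shift adds $\lambda I$ to the upper-left block, giving precisely $\lambda I+K$ with $K$ diagonal, as required.

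The most delicate step is the coordinated subsequence extraction: making $(x_n)$ close enough to orthonormal that $V_{11}$ (and hence $V$) is bounded invertible, while simultaneously forcing $\sup_n|\lambda_n|<\varepsilon$ and $\sum_n a_n^2$ small. All three follow from weak nullity of $(x_n)$ and from $\lambda_n,a_n\to 0$, but must be arranged in a single pass. The degenerate case $(I-P)e=0$ (which forces $e\in Y$ and makes $Y$ strictly $T$-invariant with $F=0$) is dispatched by replacing $Y$ with the half-space $Y\cap\{e\}^\perp$, which remains almost-invariant of defect at most one and for which $(I-P)e\neq 0$.
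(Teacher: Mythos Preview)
Your argument is essentially the paper's: weak nullity of $(x_n)$ from the eigenvalue hypothesis, Bessaga--Pe{\l}czy\'nski to extract a Riesz subsequence, and the block decomposition with norm control from Proposition~\ref{smallnorm}; your explicit block-upper-triangular $V$ spells out what the paper leaves as a one-line remark about a similarity ``that maps $(x_n)$ to an orthonormal basis and makes $e$ orthogonal to $Y$''. The only wrinkle is the degenerate case $(I-P)e=0$: passing to $Y\cap\{e\}^\perp$ discards the Riesz basis $(x_n)$ satisfying $Tx_n=\lambda_nx_n-a_ne$ that your construction of $V$ requires, so a cleaner fix is to note that $[x_{2n}]\cap[x_{2n+1}]=\{0\}$ and pass at the outset to whichever of these subsequences omits~$e$.
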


\begin{proof}
There is no loss of generality in assuming that $\lambda=0$. Let $(x_n)$ be the sequence as in the proof of Theorem~\ref{maintheorem}. Passing to a subsequence, we may assume that $(x_n)$ converges weakly to a vector $x\in\mathcal H$. If $x$ were not equal to zero, we would have by the formula~\eqref{conv} in the proof of Theorem~\ref{maintheorem} that $Tx=0$, which is impossible by the assumptions. Hence, $(x_n)$ is a normalized weakly null sequence. By the Bessaga-Pe\l czy\'nski selection principle, $(x_n)$ has a subsequence equivalent to a block basis of the unit vector basis of $\ell_2$. Therefore, passing to a subsequence, we may assume that the sequence $(x_n)$ obtained in the proof of Theorem~\ref{maintheorem} is equivalent to the orthonormal basis of~$\mathcal H$.

Let $\varepsilon>0$. Repeating the argument of Proposition~\ref{smallnorm}, we obtain, by passing to a subsequence of $(x_n)$, a half-space $Y=[x_n]$ such that, relative to the decomposition $\mathcal H=Y\oplus Y^\perp$, the operator $T$ can be written in the matrix form $T=\left[
  \begin{array}{cc}
    K & * \\
    F & * \\
  \end{array}
\right]$, where $K$ is compact, $F$ is rank one, and $T|_Y:Y\to\mathcal H$ has the norm less than~$\varepsilon$. The last condition clearly implies that $\norm{K}<\varepsilon$ and $\norm{F}<\varepsilon$. Finally, observe that it follows from formula~\eqref{conv} and the fact that $(x_n)$ is equivalent to an orthonormal basis, that a similarity that maps $(x_n)$ to an orthonormal basis and makes $e$ orthogonal to~$Y$ produces the diagonal version of our representation.
\end{proof}

We remark that if an operator $T$ on a Hilbert space does not satisfy the conditions of Theorem~\ref{smallness-hilb}, then it has eigenvalues and, in particular, has invariant subspaces (which, of course, may have finite dimension or finite codimension). Regarding the subspaces that are half-spaces, we have the following version of Theorem~\ref{smallness-hilb} for such operators.

\begin{proposition}\label{non-trivial-ker}
Let $T\in\mathcal{B}(\mathcal{H})$ be such that both  $\partial\sigma(T)$ and $\partial\sigma(T^*)$ consist of eigenvalues. Then for any $\varepsilon>0$, there exists a finite rank operator $F$ such that $\norm{F}<\varepsilon$ and $T-F$ has an invariant subspace of infinite dimension and codimension. The rank of $F$ can be chosen independent of~$\varepsilon$.
\end{proposition}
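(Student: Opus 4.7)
The plan is a case analysis that reduces Proposition~\ref{non-trivial-ker} to two concrete subcases, each yielding a rank-one small-norm perturbation, so the rank of $F$ will be at most one in every case. If $\partial\sigma(T)$ is infinite, then both $T$ and $T^*$ admit infinitely many distinct eigenvalues, and the argument of Case 1 in the proof of Theorem~\ref{reflexive} supplies a genuine $T$-invariant half-space (so $F=0$). Otherwise $\partial\sigma(T)$ is finite, and since a compact subset of $\mathbb{C}$ with finite boundary has empty interior, $\sigma(T)$ is itself finite. The Riesz spectral projections then decompose $\mathcal{H}$ into a finite direct sum of $T$-invariant spectral subspaces, each with a single-point spectrum: if two of these are infinite-dimensional, one of them is itself a $T$-invariant half-space, while otherwise I restrict to the unique infinite-dimensional summand, extend any perturbation by zero on the finite-dimensional complement, and (by Remark~\ref{shift}) assume $\sigma(T)=\{0\}$. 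Finally, if $\ker T$ is infinite-dimensional, then a half-space contained in $\ker T$ is a $T$-invariant half-space of $\mathcal{H}$; if $\ker T^*$ is infinite-dimensional while $\ker T$ is finite-dimensional, then $\Range T$ is infinite-dimensional and $\overline{\Range T}$ is a $T$-invariant half-space. We may thus reduce to $\sigma(T)=\{0\}$ on an infinite-dimensional $\mathcal{H}$ with $\ker T$ and $\ker T^*$ both nonzero and finite-dimensional.

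In this reduced setting, $T$ cannot be nilpotent (since $\dim\ker T^n \le n\dim\ker T$ is finite), so each $Y_n := \overline{T^n \mathcal{H}}$ is infinite-dimensional. I would then invoke the dichotomy from Case 2 of the proof of Theorem~\ref{reflexive}: either (a) some $Y_N$ is such that $T|_{Y_N} : Y_N \to Y_N$ has dense range, or (b) some vector $z \in \mathcal{H}$ has minimal orbit $\{T^n z\}$. In case (a), the operator $(T|_{Y_N})^*$ on $Y_N$ has spectrum $\{0\}$ and is injective, so $0$ is a boundary point of $\sigma((T|_{Y_N})^*)$ that is not an eigenvalue; applying Theorem~\ref{smallness-hilb} to $(T|_{Y_N})^*$ produces an orthogonal decomposition $Y_N = Z \oplus Z^\perp$ in which $(T|_{Y_N})^*$ has a small rank-one lower-left matrix block. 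Taking Hilbert adjoints, $T|_{Y_N}$ has a small rank-one upper-right block in the same decomposition, and killing it makes $Z^\perp \subset Y_N$ a $T|_{Y_N}$-invariant half-space of $Y_N$. Extending the rank-one perturbation by zero on the finite-codimensional complement of $Y_N$ in $\mathcal{H}$ yields the required $F$, of rank one and norm less than $\varepsilon$.

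In case (b), I would repeat the construction of Theorem~\ref{maintheorem} with $e := z$: choose $\lambda_n \to 0$ in $\rho(T)$ and set $h_n := (\lambda_n I - T)^{-1} z$. Minimality of $\{T^n z\}$ supplies a biorthogonal functional $f^*$ with $f^*(z) = 1$ and $f^*(T^k z) = 0$ for $k \ge 1$; since $T$ is quasinilpotent, the Neumann series $h_n = \sum_{k \ge 0} T^k z/\lambda_n^{k+1}$ converges and yields $f^*(h_n) = 1/\lambda_n$, so $\|h_n\| \ge 1/(|\lambda_n|\,\|f^*\|) \to \infty$. Setting $x_n = h_n/\|h_n\|$, the identity $T x_n = \lambda_n x_n - z/\|h_n\|$ forces $\|T x_n\| \to 0$. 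The basic-sequence extraction from the proof of Theorem~\ref{oldpaper} (which uses the minimal-orbit hypothesis) combined with the subsequence selection of Proposition~\ref{smallnorm} (choose $(x_{n_k})$ with $\sum_k \|T x_{n_k}\|$ arbitrarily small) then yields a rank-one perturbation of norm less than $\varepsilon$ making the closed linear span of the chosen subsequence invariant. The main obstacle is case (b), since Theorem~\ref{oldpaper} is stated without any norm estimate on the defect; however, the Proposition~\ref{smallnorm}-style argument requires only that $\|h_n\| \to \infty$, which is immediate from minimality in the quasinilpotent setting.
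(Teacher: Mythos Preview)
Your reduction to the quasinilpotent case and your handling of case~(a) are fine and parallel the paper's setup. The gap is in case~(b). You assert that ``the Proposition~\ref{smallnorm}-style argument requires only that $\|h_n\|\to\infty$,'' but this is not so: Proposition~\ref{smallnorm} begins by quoting the basic sequence $(x_n)$ produced in Theorem~\ref{maintheorem}, and that basicness was obtained via the Kadets--Pe\l czy\'nski criterion precisely because any weak cluster point $x$ of $(x_n)$ satisfies $Tx=0$ and $0$ is assumed not to be an eigenvalue. In your situation $0$ \emph{is} an eigenvalue, so a subsequence of $(x_n)$ may converge weakly (even in norm) to a nonzero vector in $\ker T$; nothing in your argument rules this out, and invoking ``the basic-sequence extraction from the proof of Theorem~\ref{oldpaper}'' does not help, since that proof chooses its $\lambda_n$ for a different purpose and carries no norm control on $T|_{[x_n]}$. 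If you try to repair this by passing to $y_n=(x_n-Px_n)/\|x_n-Px_n\|$ with $P$ the projection onto $\ker T$, you recover a weakly null sequence with $\|Ty_n\|\to 0$, but the identity for $Ty_n$ now picks up terms from $\ker T$ in addition to $z$, so the defect becomes $\dim\ker T+1$ rather than one, defeating your rank-one claim.

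The paper sidesteps case~(b) altogether. After reducing to $T$ quasinilpotent with $n=\dim\ker T<\infty$ and $m=\codim\overline{\Range T}<\infty$ (and $n\le m$ after possibly passing to $T^*$), it builds a rank-$n$ operator $G$ with $\|G\|<\varepsilon/2$ sending a basis of $\ker T$ injectively into $(\overline{\Range T})^\perp$, so that $T+G$ is injective. Since $\sigma_{ess}(T+G)=\sigma_{ess}(T)=\{0\}$, the point $0$ lies in $\partial\sigma(T+G)$ and is not an eigenvalue, and Theorem~\ref{smallness-hilb} applies directly to $T+G$, yielding a further rank-one perturbation of norm $<\varepsilon/2$. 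The total perturbation has rank at most $n+1$, independent of $\varepsilon$. Your route, were case~(b) salvageable, would improve this to rank one; the paper's route trades that sharper bound for an argument that does not need to revisit the basic-sequence extraction when $\ker T\ne\{0\}$.
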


\begin{proof}
Notice that if the operator $T$ satisfies the conclusion of the proposition, so does $T^*$. Therefore, we may switch from $T$ to $T^{*}$ as needed.

As observed in the proof of Theorem~\ref{reflexive}, if $\sigma(T)$ is infinite and both $\partial\sigma(T)$ and $\partial\sigma(T^*)$ consist of eigenvalues, then $T$ has an invariant subspace of infinite dimension and codimension. So, there is no loss of generality in assuming that $\sigma(T)$ is finite. By the standard spectral theory, $\mathcal H$ splits into finitely many $T$-invariant subspaces which complement each other such that the spectrum of the restriction of $T$ to each of the subspaces is a singleton. Observe that at least one of the invariant subspaces is infinite dimensional, and it is enough to obtain the conclusion of the proposition for the restriction of $T$ to one such infinite dimensional invariant subspace. Therefore, by subtracting a multiple of the identity if necessary, we may assume without loss of generality that $T$ is quasinilpotent. Notice that in this case $\sigma_{ess}(T)=\{0\}$, where  $\sigma_{ess}(T)$ is the essential spectrum of $T$.

Denote the null space of $T$ by $N$ and the closure of the range of $T$ by $R$. It is clear that, without loss of generality, $n=\dim N<\infty$ and $m=\mathrm{codim} R<\infty$. Further, replacing $T$ by $T^*$, if necessary, we may assume that $n\leq m$. Fix $\varepsilon>0$, and we claim that there exists an operator $G\in\mathcal B(\mathcal H)$ of rank $n$ such that $\norm G<\varepsilon/2$ and $(T+G)$ is injective. Indeed, let $f_1,\dots, f_n$ be a basis for the space $N$ and $g_1,\dots,g_m$ a basis for $R^\perp$. Define $G:N\to R^\perp$ by $G(f_i)=g_i$, where $i\in\{1,\dots,n\}$. Clearly, $G$ is a well-defined (bounded) operator. This can be extended to a bounded operator in $\mathcal B(\mathcal H)$ by letting $G|_{N^\perp}=0$. A straightforward verification shows that $T+\alpha G$ is injective for any non-zero scalar $\alpha\in\mathbb C$. This concludes the claim.

Recall that the essential spectrum of an operator is stable under compact perturbation, so we have that $\sigma_{ess}(T)=\sigma_{ess}(T+G)=\{0\}$. It follows that $0\in\sigma(T+G)$ and is not an eigenvalue. Since $\sigma_{ess}(T+G)=\{0\}$, the spectrum of $T+G$ is at most countable, with $0$ the only possible accumulation point (see e.g. \cite{AA}, Corollary 7.50). In particular, $0\in\partial\sigma(T+G)$.  This shows that $T+G$ satisfies the conditions of Theorem~\ref{smallness-hilb}. Hence, there exists a rank one operator $F_1$, with $\norm{F_1}<\varepsilon/2$, such that $T+G-F_1$ has an invariant subspace of infinite dimension and codimension. Obviously, this implies the desired statement.
\end{proof}

\end{document}